\newtheorem{theorem}{Theorem}[section]
\newtheorem{lemma}[theorem]{Lemma}
\newtheorem{proposition}[theorem]{Proposition}
\theoremstyle{definition}
\newtheorem{definition}[theorem]{Definition}
\newcommand{\NN}{\mathbb{N}}
\newcommand{\RR}{\mathbb{R}}
\newcommand{\R}{\RR}
\DeclareMathOperator*{\conv}{conv}
\newcommand{\cA}{\mathcal{A}}
\newcommand{\cC}{\mathcal{C}}
\newcommand{\nonneg}[1]{\NN^{#1}} 
\newcommand{\even}[1]{2 \NN^{#1}} 
\newcommand{\SONC}[1]{\cC({#1})} 
\newcommand{\Pspace}[1]{\R[{#1}]} 
\newcommand{\alp}{\alpha}
\newcommand{\Vector}[1]{\ensuremath{\boldsymbol{#1}}}
\newcommand{\alpb}{{\Vector{\alp}}}
\newcommand{\betab}{{\Vector{\beta}}}
\newcommand{\gammab}{{\Vector{\gamma}}}
\newcommand{\xb}{\mathbf{x}}
\definecolor{NiceBlue}{rgb}{0.2,0.2,0.75}
\newcommand{\struc}[1]{{\color{NiceBlue} #1}}
\author{Mareike Dressler}
\address{Mareike Dressler, School of Mathematics and Statistics, University of New South Wales, Sydney, NSW 2052, Australia.}
\author{Hongzhi Liao}
\address{Hongzhi Liao, School of Mathematics and Statistics, University of New South Wales, Sydney, NSW 2052, Australia.}
\author{Vera Roshchina}
\address{Vera Roshchina, School of Mathematics and Statistics, University of New South Wales, Sydney, NSW 2052, Australia.}
\subjclass[2020]{
	Primary:
14P10,  	
52A40,      
90C23; 
Secondary:
52A20,  	
90C26. 
}
\keywords{Nonnegative polynomials, sums of nonnegative circuit polynomials, convex cones, extreme rays, exposed faces.}
\title{Exposed extreme rays of the SONC cone}
\begin{document}

\begin{abstract}
We provide a complete and explicit characterization of the exposed extreme rays of the cone of sums of nonnegative circuit (SONC) polynomials.   
The criterion we derive is purely combinatorial and depends only on the existence of certain circuits within the ground set and on the nature of the corresponding extreme ray.
Our constructive proofs also yield explicit exposing functionals, offering a basis for algorithmic detection of exposed rays in SONC-based optimization.
\end{abstract}

\maketitle

\section{Introduction}

A central objective in real algebraic geometry, 
and a driving force in polynomial optimization,
is to understand the structure of the cone of nonnegative polynomials and, crucially, its \textit{tractable} subcones, that is, those for which membership can be efficiently verified.
The most prominent such subcone is the cone of sums of squares (SOS), whose relationship to nonnegativity has been extensively studied since Hilbert’s seminal 1888 work~\cite{Hilbert1888}.
Over the past decades, the facial geometry of the cone of nonnegative polynomials and the SOS cone has received sustained attention, including analyses of faces, extreme and exposed rays, the boundary, and rank structure, see e.g. \cite{Blekherman:NNSOS,Blekherman:etal:Boundaries,Blekherman:Iliman:Kubitzke,Blekherman:Parrilo:Thomas}.
These works have substantially advanced the understanding of the structure and interplay of these cones, yet a complete understanding is still lacking.

\smallskip
A more recent example of a tractable subcone, which is independent of the SOS cone, is the cone of sums of nonnegative circuit (SONC) polynomials. 
First formally introduced in~\cite{iliman2016amoebas} and extending the earlier notion of agiforms from~\cite{reznick1989forms}, the SONC cone
provides a sparsity-preserving and combinatorially governed alternative to SOS-based certificates. An equivalent construction in the signomial setting, known as the cone of sums of arithmetic-geometric exponentials (SAGE), is introduced in~\cite{ChandrasekaranShah_REPforSignomialOptimization}, and related but independently developed frameworks can be found in~\cite{Fidalgo:Kovacec,PanteaEtAL_GlobalInjectivityAndMultipleEquilibria}.   

Despite several recent works that have substantially advanced our knowledge of the geometry of the SONC cone, see e.g.~\cite{katthan2021unified,dressler2021real,AlgBound}, its facial structure remains far from fully understood. In particular, it is still unknown which of its faces are exposed.  In~\cite{dressler2021real}, the first author initiated the study of exposed faces of the SONC cone, hereby focusing on exposed faces arising from polynomials vanishing at a finite set of points in $\R^n$ and providing explicit classifications in low dimensions together with general dimension bounds.

\smallskip
In this work, we (partially) close this gap by fully characterizing the exposed extreme rays of the SONC cone. In contrast to the SOS cone and the cone of nonnegative polynomials, whose facial structure is amenable to the methods of algebraic geometry, the SONC cone is built as the conic hull of elementary cones, with structure governed by the combinatorics of the underlying polynomials. We use a convex geometric approach to characterize the exposed extreme rays of the SONC cone in terms of its combinatorial features.

A polynomial $f(\xb)=\sum_{\alpb \in S} c_{\alpb} \xb^{\alpb} + d \xb^{\betab}$, (where we use the standard shorthand for writing multivariate monomials, $\xb^\alpb \coloneqq x_1^{\alp_1} x_2^{\alp_2} \cdots x_n^{\alp_n}$), where $S$ is an affinely independent subset of $\even{n}$ with $\betab\in \nonneg{n}$ in the relative interior of the convex hull of $S$ and the coefficients $c_{\alpb}>0$, is called a circuit polynomial; see also Section~\ref{subsec:nncircuits}. Deciding  nonnegativity of a circuit polynomial is equivalent to solving a system of linear equations, a fact that follows directly from the arithmetic-geometric inequality. The distinguishing feature of this approach is its sparsity-preserving nature, making it particularly well-suited for handling high-degree polynomials with sparse support. This motivates us to consider the SONC cone on a given finite ground set $\cA\subseteq \nonneg{n}$ as the set of all SONC polynomials supported on $\cA$, that is, all nonnegative combinations of circuit polynomials with ``circuits" on $\cA$ (i.e., $S\subseteq \cA$ and $\betab\in \cA$). For the explicit definition, see Sections~\ref{subsec:nncircuits} and \ref{subsec:SONC}.

\medskip
Our main result is the following characterization of exposed rays of the SONC cone.

\begin{theorem}\label{thm:main} An extreme ray $r$ of the SONC cone on a finite ground set $\cA\subseteq\nonneg{n}$ is not exposed if and only if $r=\R_+ \xb^\alpb$ for some $\alpb\in \cA$, and there exists a circuit $(S,\betab)$ on $\cA$ such that $\alpb \in S$ and $\betab\neq \alpb$ is even.
\end{theorem}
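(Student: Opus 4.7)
The strategy is to work in the dual cone $\SONC{\cA}^*$. Writing a linear functional on $\Pspace{\cA}$ as a vector $\ell=(\ell_\gamma)_{\gamma\in\cA}$ via $\ell_\gamma := \ell(\xb^\gamma)$, standard duality combined with weighted arithmetic--geometric (AG) mean yields the following description: $\ell \in \SONC{\cA}^*$ iff $\ell_\gamma \ge 0$ for every $\gamma \in \cA \cap \even{n}$ and, for every circuit $(S,\betab)$ on $\cA$ with barycentric coordinates $\lambda_\gamma$ of $\betab$ in $\conv(S)$, the inequality $\prod_{\gamma\in S}\ell_\gamma^{\lambda_\gamma} \ge \ell_\betab$ holds (with $|\ell_\betab|$ on the right when $\betab\notin\even{n}$). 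An extreme ray $r$ of $\SONC{\cA}$ is exposed iff some $\ell \in \SONC{\cA}^*$ satisfies $\ell^\perp \cap \SONC{\cA} = r$, so the theorem reduces to analysing which extreme rays admit such a functional.

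\textbf{The ``if'' direction.} Suppose $r = \R_+ \xb^\alpb$ and a bad circuit $(S,\betab)$ exists with $\alpb \in S$, $\betab$ even and $\betab \neq \alpb$. For any $\ell \in \SONC{\cA}^*$ with $\ell_\alpb = 0$, the dual AG inequality for $(S,\betab)$ gives $0 = \ell_\alpb^{\lambda_\alpb}\prod_{\gamma\neq \alpb}\ell_\gamma^{\lambda_\gamma} \ge \ell_\betab$; combined with $\ell_\betab \ge 0$ (forced by $\xb^\betab \in \SONC{\cA}$), we obtain $\ell_\betab = 0$. Hence $\ell$ also annihilates $\xb^\betab$, a SONC polynomial not proportional to $\xb^\alpb$, so $\ell$ fails to expose $r$; as this holds for every such $\ell$, $r$ is not exposed.

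\textbf{The ``only if'' direction.} I split by the type of extreme ray. If $r = \R_+ f$ with $f = \sum_{\gamma\in S} c_\gamma \xb^\gamma + d \xb^\betab$ an extremal nonnegative circuit polynomial with $\betab\notin\even{n}$ and $|d|=\Theta_f$, pick $\xb_0\in\R^n$ with all coordinates nonzero realising the AG-equality zero of $f$, and take $\ell = \delta_{\xb_0}$; it is nonnegative on $\SONC{\cA}$, annihilates $f$, and for a generic choice of $\xb_0$ the only circuit polynomials on $\cA$ vanishing at $\xb_0$ are positive multiples of $f$, so $\ell$ exposes $r$. If instead $r = \R_+ \xb^\alpb$ and no bad circuit exists, let $Z\subseteq\cA$ be the set of interior exponents of circuits containing $\alpb$ as a vertex; by hypothesis $Z\setminus\{\alpb\}\subseteq\cA\setminus\even{n}$. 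Define $\ell_\gamma=0$ for $\gamma\in\{\alpb\}\cup Z$ and $\ell_\gamma = e^{v(\gamma)}$ otherwise, where $v$ is any strictly convex function on $\R^n$ (e.g.\ $v(\gamma) = \|\gamma\|^2$). For every circuit $(S,\betab)$: if $\alpb\in S$, then both $\prod\ell_\gamma^{\lambda_\gamma}$ and $\ell_\betab$ vanish ($\betab\in Z$); if $\alpb\notin S$, then $S\cap Z=\emptyset$ (since $S\subseteq\even{n}$ while $Z\setminus\{\alpb\}$ is disjoint from $\even{n}$), so all $\ell_\gamma$ on $S$ are positive and strict convexity of $v$ yields strict AG, i.e.\ $\prod\ell_\gamma^{\lambda_\gamma}>\ell_\betab$. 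Thus $\ell \in \SONC{\cA}^*$, and the strictness, together with $Z\cap\even{n}\subseteq\{\alpb\}$, forces $\ell$ to vanish on no SONC extremal element outside $\R_+\xb^\alpb$.

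\textbf{Main obstacle.} The reverse ``if'' direction and the monomial case of ``only if'' are clean consequences of the dual AG inequality once the dual description of $\SONC{\cA}^*$ is available. The delicate step is the circuit-polynomial case of ``only if'', where one must certify that a suitably chosen zero of the extremal circuit polynomial $f$ is not simultaneously annihilated by any other extremal SONC element supported on $\cA$. Since $\cA$ is finite, there are only finitely many circuits, and the zero locus of each distinct extremal circuit polynomial on $\cA$ cuts out a proper algebraic subvariety of the AG-equality locus of $f$; I expect a standard transversality/genericity argument to produce the required $\xb_0$, possibly supplemented by a small perturbation of $\delta_{\xb_0}$ in directions orthogonal to $f$ to handle any coincidental common zeros.
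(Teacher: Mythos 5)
Your dual-cone framework is sound: the description of $\SONC{\cA}^*$ by the inequalities $\prod_{\gammab\in S}\ell_{\gammab}^{\lambda_{\gammab}}\ge|\ell_{\betab}|$ does follow from the extreme-ray classification plus weighted AM--GM, your ``if'' direction is a clean dual reformulation of the paper's Proposition~\ref{prop:unexposed} (the paper instead exhibits a family $f_t$ with $l(f_t)<0$ for small $t$), and your monomial ``only if'' case is correct and genuinely simpler than the paper's Proposition~\ref{prop:monomial}: setting $\ell_{\gammab}=e^{\|\gammab\|^2}$ and using strict convexity to get $\sum\lambda_{\gammab}\|\gammab\|^2>\|\betab\|^2$ replaces the paper's graded partition and doubly-exponential weights $2^{\Lambda^i}$ entirely in that case. (You still need the reduction to checking extreme generators, i.e.\ the paper's Lemma~\ref{lem:extremeChar}, to pass from ``positive on all other extremal elements'' to ``exposes the ray''.)

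The non-monomial case, however, has a fatal gap, and it is not the transversality issue you flag. Point evaluation can \emph{never} expose a non-monomial extreme ray as soon as $\cA$ supports a second reduced circuit, for a structural reason: given any point $\xb_0$ with all coordinates nonzero and any reduced circuit $(S',\gammab)$ with $|S'|>1$, the choice $c'_{\alpb}=\lambda'_{\alpb}/\xb_0^{\alpb}>0$ (recall $\alpb$ is even) together with the appropriate sign on the inner term produces an \emph{extremal} nonnegative circuit polynomial $g$ on $(S',\gammab)$ with $g(\xb_0)=\sum\lambda'_{\alpb}-1=0$. So $\delta_{\xb_0}$ annihilates at least one extreme ray per reduced circuit on $\cA$, for every admissible $\xb_0$; no genericity in $\xb_0$ helps, because the extreme rays of type 2 and 3 form continuous families (one for each choice of positive coefficients), and the family on each circuit always contains a member vanishing at $\xb_0$. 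Your proposed fix --- a small perturbation of $\delta_{\xb_0}$ orthogonal to $f$ --- is not a perturbative matter either: the exposing functionals the paper constructs in Proposition~\ref{prop:nonmonomial} pin the values on $S\cup\{\betab\}$ to $\sigma^{-1}\lambda_{\alpb}/c_{\alpb}$ and $\mp\sigma^{-1}\Theta_f^{-1}$ and then assign values $(\sigma^{-1}\delta)^{\Lambda^i}$ growing doubly exponentially along a graded partition of $\cA\cap\even{n}$; this hierarchy of magnitudes is what forces strict positivity on every other circuit, and it is nowhere near any point evaluation. (Separately, your case split writes $\betab\notin\even{n}$ for the non-monomial case, which omits type-2 rays with even interior point, e.g.\ Motzkin-type polynomials, but this is subsumed by the larger problem above.) To close the gap you need a genuinely non-evaluation construction such as the paper's graded partition, or an abstract argument that the prescribed values on $S\cup\{\betab\}$ extend to a functional satisfying all the strict dual inequalities.
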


This result is proved by showing separately that the extreme rays that correspond to the conditions in the theorem are not exposing, and constructing explicit exposing normals for all of the remaining cases. The construction of these normals relies on a graded partition of the ground set $\cA$ that allows precise control over the relative magnitudes of the normal entries and reflects the combinatorial layering inherent in $\cA$.

\smallskip
While it is conceivable that the existence of such exposing normals could be deduced from general convex analysis arguments (as discussed in Section~\ref{sec:open}), our proof is fully constructive in the sense that we provide explicit normals certifying exposure of each ray. 
This constructive approach not only clarifies the geometry of the SONC cone but also lends itself to algorithmic implementation, potentially contributing to practical optimization frameworks that incorporate SONC certificates.

\medskip
Our paper is organized as follows. Section~\ref{sec:preliminaries} sets up the notation and reviews key properties of the SONC cone, including the characterization of its extreme rays.  In Section~\ref{sec:unexposed} we establish the negative part of the result in Proposition~\ref{prop:unexposed}. In Section~\ref{sec:exposed} we show that the remaining extreme rays are exposed in Propositions~\ref{prop:monomial} and~\ref{prop:nonmonomial}, and finalize the proof of Theorem~\ref{thm:main}. We close with a brief discussion of open problems in Section~\ref{sec:open}.

\section{Preliminaries}\label{sec:preliminaries}
Throughout the article, we use $\struc{\NN}$ and $\struc{\R}$ to denote the sets of nonnegative integers and real numbers, respectively. $\struc{\R_+}$ indicates the nonnegative elements of $\R$. 
For a set $S\subseteq \NN^n$, we denote its \struc{\emph{convex hull}} by $\struc{\conv(S)}$ and the set of \struc{\emph{vertices}} (extreme points) of a convex set $\conv(S)$ by $\struc{V(S)}$. We call a lattice point $\alpb \in \NN^n$ \struc{\emph{even}} if every entry $\alp_i$ is even, i.e., $\alpb\in 2\NN^n$.

\subsection{Nonnegative circuit polynomials}\label{subsec:nncircuits}
Recall that a \struc{\emph{circuit}} is a set that is minimal affine dependent, see e.g. \cite{GKZ:Discriminants}. If a circuit $S\cup \{\betab\}\subseteq \NN^n$ is such that $S$ is affinely independent and $\betab$ belongs to the relative interior of $\conv(S)$, we call the circuit \emph{simplicial}. Since we are only dealing with simplicial circuits, we just refer to this case as a circuit hereafter. We will also use the notation \struc{$(S,\betab)$} to denote such circuits to streamline our exposition. 
Further, we say a circuit $(S,\betab)$ is \struc{\emph{odd}} or \struc{\emph{even}} according to the parity of $\betab$.  
Note that for a circuit, $\conv(S)$ is a simplex, and hence there is a unique way to represent $\betab$ as a convex combination of $S$. That is, there exists unique \struc{\emph{barycentric coordinates $(\lambda_{\alpb})_{\alpb \in S}$}} (with respect to $S$) such that $\betab = \sum_{\alpb \in S} \lambda_\alpb \alpb$, with $\lambda_\alpb>0$ for all $\alpb \in S$ and $\sum_{\alpb \in S} \lambda_\alpb = 1$. 

\smallskip
In what follows, we are interested in sparse polynomials supported on a finite set. Therefore, let $\struc{\cA}\subseteq \NN^n$ be a distinguished finite ground set and consider $\struc{\Pspace\cA}$,   the ring of real polynomials supported on $\cA$. 
A polynomial $f\in \Pspace\cA$ is called a \struc{\emph{circuit polynomial}} if it is supported on a circuit $(S,\betab)$ and $S\subseteq 2\NN^n$ or it is a (sum of) monomial square(s). 
This implies that we can write $f$ as
\begin{equation}\label{eq:defcircuit}
f(\xb)=\sum_{\alpb \in S} c_{\alpb} \xb^{\alpb} + d \xb^{\betab},
\end{equation}
with coefficients $c_\alpb >0$ for all ${\alpb\in S}$, $d\in \RR$, and support $S\subseteq 2\NN^n$, $\betab \in \NN^n$. 
The \struc{\emph{circuit number}} of $f$ is defined as 
\begin{equation}\label{eq:defcircuitnumber}
\Theta_f  \coloneqq \prod_{\alp \in S} \left( \frac{c_\alpb}{\lambda_\alpb}\right)^{\lambda_\alpb}.
\end{equation}
 
This invariant can be used to easily check if a circuit polynomial is nonnegative, see \cite[Theorem 1.1]{iliman2016amoebas}. Let $f$ be a circuit polynomial as in~\eqref{eq:defcircuit}, then $f$ is nonnegative if and only if 
 $f$ is a sum of monomial squares, or
 $   |d|\leq \Theta_f$.

\subsection{The SONC cone and its extreme rays}\label{subsec:SONC} 
If a polynomial can be written as a sum of nonnegative circuit polynomials, we call it a \struc{\emph{SONC polynomial}}.
The \struc{\emph{SONC cone $ \SONC{\cA}$}} on some finite ground set $\cA\subseteq \nonneg{n}$ is the set of all conic (finite nonnegative) combinations of nonnegative circuit polynomials in $\Pspace\cA$. It is not necessary to consider all possible circuits on $\cA$ to represent each polynomial in $\SONC{\cA}$ as a conic combination of nonnegative circuit polynomials; instead, the representation can be restricted to reduced circuits. We adjust the original definition of reduced circuits given in \cite{katthan2021unified}  to our less general setting as follows.

\begin{definition} Let $\cA\subseteq \nonneg{n}$ be finite. A circuit $(S,\betab)$ on $\cA$ is \struc{\emph{reduced}} (with respect to $\cA$) if 
\[
\conv (S) \cap \cA \cap \even{n} = S\cup \{\betab\} \cap \even{n}.
\]  
\end{definition}

In other words, a circuit is reduced if there are no even points in its convex hull apart from the points in $S\cup\{\betab\}$.

\smallskip
Recall that a ray $r=\R_+\cdot v$ (for some $v\neq 0$) is an \struc{\emph{extreme ray}} of a convex cone $K$ if for any $y,z\in K$ such that $x = y+z$, we have $y,z\in \R_+\cdot v$. In other words, an extreme ray is a one-dimensional face of a pointed cone $K$.

The concept of reduced circuits leads to a complete characterization of the extreme rays of the SONC cone.

\begin{proposition}[\cite{katthan2021unified}, Corollary~4.6]\label{prop:extreme} 
Let $r$ be an extreme ray of $\SONC{\cA}$. Then $r = \RR_+ f$, where $f\in \Pspace\cA$ belongs to one of the following three types of nonnegative circuit polynomials:
\begin{itemize}
    \item[1.] $f(\xb) = \xb^\betab, \quad \betab\in \cA\cap \even{n}$;
    \item[2.] $f$ is supported on some (even or odd) reduced circuit $(S,\betab)$, with $|S|>1$ and  
    \[
    f(\xb) = \sum_{\alpb \in S} c_\alpb \xb^\alpb - \Theta_f \xb^\betab,\quad c_\alpb >0\quad  \text{ for all } \; \alpb \in S;
    \]
    \item[3.] $f$ is supported on an odd reduced circuit $(S,\betab)$, with $|S|>1$ and  
    \[
    f(\xb) = \sum_{\alpb \in S} c_\alpb \xb^\alpb +  \Theta_f \xb^\betab,\quad c_\alpb >0\quad  \text{ for all } \; \alpb \in S.
    \]
\end{itemize}
\end{proposition}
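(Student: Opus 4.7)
The plan is to prove the characterization by separately establishing necessity (every extreme ray is of one of the three listed forms) and sufficiency (each of those forms is an extreme ray), working directly from the definition of $\SONC{\cA}$ as conic combinations of nonnegative circuit polynomials supported on $\cA$.

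For necessity, I begin with the observation that if $r = \R_+ f$ is an extreme ray and $f = \sum_i \mu_i f_i$ is a conic decomposition of $f$ into nonnegative circuit polynomials on $\cA$, then extremality forces every $f_i$ to lie in $r$, so up to scaling $f$ is itself a single nonnegative circuit polynomial on some circuit $(S,\betab)$. Three reductions then eliminate the non-extreme such polynomials. First, if $|d| < \Theta_f$, then $f$ is the strict convex combination $\tfrac12(1+d/\Theta_f) f_+ + \tfrac12(1-d/\Theta_f) f_-$, where $f_\pm = \sum_{\alpb\in S} c_\alpb \xb^\alpb \pm \Theta_f \xb^\betab$ are both nonnegative circuit polynomials (by the circuit-number criterion recalled in Section~\ref{subsec:nncircuits}), so $f$ is not extreme. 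Second, if $\betab$ is even and $d = +\Theta_f$, then $f = \sum c_\alpb \xb^\alpb + \Theta_f \xb^\betab$ is a strict nonnegative combination of the distinct monomial squares $\xb^\alpb$ ($\alpb \in S$) and $\xb^\betab$, again not extreme. Third, if $(S,\betab)$ is not reduced, there exists an even $\gammab \in \conv(S)\cap\cA\cap\even{n} \setminus (S\cup\{\betab\})$; triangulating $\conv(S)$ through $\gammab$ into sub-simplices and applying AM-GM on each resulting sub-circuit (and on the sub-simplex containing $\betab$) produces a strict conic decomposition of $f$ into nonnegative circuit polynomials on smaller, reduced circuits, so $f$ is not extreme. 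What remains after these three reductions is precisely the list of three forms.

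For sufficiency, I would argue each type gives an extreme ray. For Type 1 ($f = \xb^\betab$, $\betab$ even): if $f = g + h$ with $g, h \in \SONC{\cA}$, then $g, h \ge 0$ pointwise and $f$ vanishes on every coordinate hyperplane $\{x_i = 0\}$ with $\beta_i > 0$, so both $g$ and $h$ vanish there; by evenness of $\betab$ and the fact that nonnegative polynomials vanish to even order on a hyperplane, the monomial $\xb^\betab$ divides $g$. Writing $g = q(\xb)\xb^\betab$ gives $0 \le q \le 1$ on the dense open set $\{\xb^\betab > 0\}$, so $q$ is a bounded polynomial on $\R^n$, hence constant, yielding $g \propto f$. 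For Types 2 and 3 I exploit the zero set of $f$: because $|d| = \Theta_f$, the polynomial $f$ vanishes at the AM-GM equality points, which form an explicit positive-dimensional real family; any decomposition $f = g + h$ with $g, h \ge 0$ forces $g$ and $h$ to vanish there, producing strong linear constraints on their coefficients. Combined with a support-localization argument that uses strict positivity of the outer coefficients of each nonnegative circuit polynomial (so for any vertex $\alpb$ of $\conv(\cA)$ outside $S\cup\{\betab\}$, no summand in a SONC decomposition of $g$ or $h$ has $\alpb$ in its outer set), one iteratively confines $\supp g \cup \supp h \subseteq S\cup\{\betab\}$. Reducedness of $(S,\betab)$ then restricts the available SONC components of $g$ and $h$ to monomial squares and scalar multiples of $f$, after which coefficient matching gives $g, h \propto f$.

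The main obstacle is the support-localization and coefficient-matching step in the sufficiency direction for Types 2 and 3. Reducedness controls only even interior points of $\conv(S)$, so one must separately exclude contributions from odd sub-circuits $(S',\betab')$ with $\betab' \in \cA \cap \relint\conv(S')$ that could in principle appear inside a SONC decomposition of $g$ or $h$. Ruling these out requires an inductive analysis on the Newton polytope structure of $f$, combined with the zero-set and sign constraints coming from nonnegativity at the AM-GM equality points; this is the technical core of the argument and mirrors the strategy carried out in \cite{katthan2021unified}.
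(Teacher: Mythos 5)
The paper gives no proof of this proposition to compare against: it is imported verbatim from \cite{katthan2021unified} (Corollary~4.6), so what you are really attempting is a proof of that cited result. Two observations before the substance. First, the statement is only a one-way classification: it asserts that every extreme ray of $\SONC{\cA}$ is generated by a polynomial of one of the three listed forms, and it does not claim that every such polynomial generates an extreme ray. Your entire sufficiency discussion (extremality of types 1--3), which you yourself flag as the technical core and leave unfinished (``mirrors the strategy carried out in \cite{katthan2021unified}''), is therefore not needed for this proposition, and as written it is also the least complete part. Second, in the direction that is needed, your first two reductions are correct and essentially complete (modulo trivial edge cases such as $|S|=1$): an extreme generator must be, up to scaling, a single nonnegative circuit polynomial; $|d|<\Theta_f$ gives the strict splitting into $f_\pm$; and $d=+\Theta_f$ with $\betab$ even gives a splitting into monomial squares.

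The genuine gap is your third reduction, which is exactly where the content of the cited corollary lives. Asserting that ``triangulating $\conv(S)$ through $\gammab$ and applying AM-GM on each resulting sub-circuit produces a strict conic decomposition of $f$'' hides the real work: you must actually exhibit a splitting of the outer coefficients $c_\alpb$, together with a choice of the new coefficient at $\xb^{\gammab}$ (whose admissible sign relies on $\gammab$ being even), such that \emph{every} piece satisfies its own circuit-number inequality and the inner coefficients recombine to $d=\pm\Theta_f$; the standard way to do this is to force each piece to vanish at the equality point of the weighted AM-GM inequality for $f$ (the zero of $f$ in the open positive orthant), or to run the inductive agiform-splitting argument of \cite{reznick1989forms}. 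One must also verify that $\betab$ lies in the relative interior of a sub-simplex having $\gammab$ as a vertex and that the corresponding vertex set is affinely independent, so that the piece containing $\betab$ is again a legitimate circuit. Until such a construction is supplied, ``non-reduced implies not extreme'' is an assertion of the result being cited rather than a proof of it. (A small simplification available to you: for non-extremality you only need one nontrivial decomposition whose pieces are not multiples of $f$ --- this is automatic because a piece carries the monomial $\xb^{\gammab}\notin\supp f$ with a nonzero coefficient --- so the further iteration down to reduced circuits that you allude to is unnecessary for this statement.)
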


\section{Unexposed rays}\label{sec:unexposed}

In this section we prove that the extreme rays of type 1 given in Proposition~\ref{prop:extreme} that satisfy the conditions of Theorem~\ref{thm:main} are unexposed. Together with the positive results of the next Section (showing that the remaining extreme rays are exposed) this proves Theorem~\ref{thm:main}.

Recall that a face $F$ of a convex cone $K\in X$ (where $X$ is a linear vector space) is \struc{\emph{exposed}} if there exists a linear function $\struc{l}\colon X\to \R$  such that
\begin{equation}\label{eq:defexposed}    
l(x)=0\; \, \text{ for all }\; x\in F,\quad \text{ and } \quad l(y)>0\;\, \text{ for all }\, y\in K\setminus F. 
\end{equation}

The following result contributes to the proof of the negative part of Theorem~\ref{thm:main}.  
\begin{proposition}\label{prop:unexposed} Let $\cA\subseteq \nonneg{n}$ be a finite set, and let $\gammab \in \cA\cap \even{n}$ be such that there exists a circuit $(S,\betab)$ on $\cA$ with $|S|>1$,  $\gammab\in S$, and $\betab$ even, then the extreme ray $r= \R_+ \xb^\gammab$ of $\SONC{\cA}$ is not exposed.
\end{proposition}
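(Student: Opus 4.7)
The plan is to argue by contradiction. Suppose a linear functional $l$ on $\Pspace{\cA}$ exposes the ray $r=\R_+\xb^\gammab$, meaning $l\geq 0$ on $\SONC{\cA}$, $l(\xb^\gammab)=0$, and $l(y)>0$ for every $y\in\SONC{\cA}\setminus r$. I would then use the assumed circuit $(S,\betab)$ to construct an explicit one-parameter family of nonnegative circuit polynomials in $\SONC{\cA}$ that forces a contradiction.

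Concretely, let $(\lambda_\alpb)_{\alpb\in S}$ be the barycentric coordinates of $\betab$ with respect to $S$. For each $c>0$ define
\[
g_c(\xb) \;=\; c\,\xb^\gammab \;+\; \sum_{\alpb\in S\setminus\{\gammab\}} \lambda_\alpb \xb^\alpb \;-\; \Theta_c \,\xb^\betab,
\]
where by the formula for the circuit number one computes $\Theta_c=(c/\lambda_\gammab)^{\lambda_\gammab}$. By the arithmetic-geometric characterization of nonnegative circuit polynomials, each $g_c$ is a nonnegative circuit polynomial on $(S,\betab)$, hence $g_c\in\SONC{\cA}$.

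Applying $l$ and using $l(\xb^\gammab)=0$ yields
\[
\sum_{\alpb\in S\setminus\{\gammab\}} \lambda_\alpb\, l(\xb^\alpb) \;\geq\; \Theta_c\, l(\xb^\betab).
\]
The left-hand side is a fixed real number while $\Theta_c\to\infty$ as $c\to\infty$ (because $\lambda_\gammab>0$), so necessarily $l(\xb^\betab)\leq 0$. On the other hand $\betab$ is even, so $\xb^\betab$ is itself a monomial square in $\SONC{\cA}$, giving $l(\xb^\betab)\geq 0$; thus $l(\xb^\betab)=0$. Since $\betab$ lies in the relative interior of the positive-dimensional simplex $\conv(S)$ containing the vertex $\gammab$, we have $\betab\neq\gammab$, so $\xb^\betab\notin r$, contradicting the assumption that $l$ exposes $r$.

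The argument is almost entirely a one-step computation once the right family $g_c$ is written down; the only delicate point is choosing the coefficients so that scaling $c$ makes $\Theta_c$ blow up while everything else stays bounded. This is arranged precisely by setting $c_\alpb=\lambda_\alpb$ for all $\alpb\in S\setminus\{\gammab\}$, which normalizes the non-$\gammab$ contribution to the circuit number to $1$ and isolates $c$ inside the $\lambda_\gammab$-th power. No assumption that $(S,\betab)$ be reduced is needed, since $g_c\in\SONC{\cA}$ regardless.
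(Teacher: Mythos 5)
Your proof is correct and follows essentially the same strategy as the paper: exploit $l(\xb^\gammab)=0$ to build a one-parameter family of nonnegative circuit polynomials on $(S,\betab)$ whose $\betab$-coefficient dominates, forcing $l(\xb^\betab)\le 0$, which contradicts $l(\xb^\betab)>0$ (since $\betab$ is even, lies in $\cA$, and $\betab\neq\gammab$). The only difference is cosmetic: the paper scales the coefficients on $S\setminus\{\gammab\}$ down and sends the circuit number to $0$, while you scale the $\gammab$-coefficient up and send it to $\infty$.
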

Before proving Proposition~\ref{prop:unexposed}, we consider the following illustrative example based on the classical case of a SONC cone associated with the Motzkin polynomial. The affine version of the Motzkin polynomial is 
\[
p_M (x,y) = 1+x^2 y^4 + x^4 y^2 - 3x^2 y^2,
\]
and it can be shown to be a nonnegative circuit polynomial using the SONC approach. Indeed, consider the SONC cone on the ground set that consists of the monomial degrees of $p_M$ (its Newton polytope is shown in Fig.~\ref{fig:Newton}).
\begin{figure}[ht]
    \centering
    \begin{overpic}[width=0.35\textwidth]{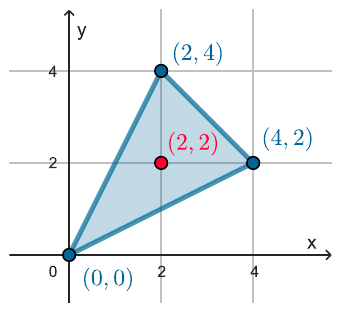} 
\end{overpic}
    \caption{Newton polytope of the Motzkin polynomial}
    \label{fig:Newton}
\end{figure} 
There is only one possible circuit on this ground set, $(S,\betab)$ with $S = \{(0,0), (2,4), (4,2)\}$ and $\betab = (2,2)$. Notice that for each $\gammab \in S$ the conditions of Proposition~\ref{prop:unexposed} are satisfied, so we expect that the rays $\R_+$, $\R_+ x^2 y^4$, and $\R_+ x^4 y^2$  are unexposed extreme rays of the SONC cone $\SONC{\cA}$ on the ground set $\cA = \{(0,0), (2,2), (2,4), (4,2)\}$. Since the cone $\SONC{\cA}$ is four-dimensional, we can visualize its compact three-dimensional slice, and witness that the corresponding extreme points are unexposed. This is illustrated in Fig.~\ref{fig:Motzkin},
\begin{figure}[ht]
    \centering
    \begin{overpic}[width=0.42\textwidth]{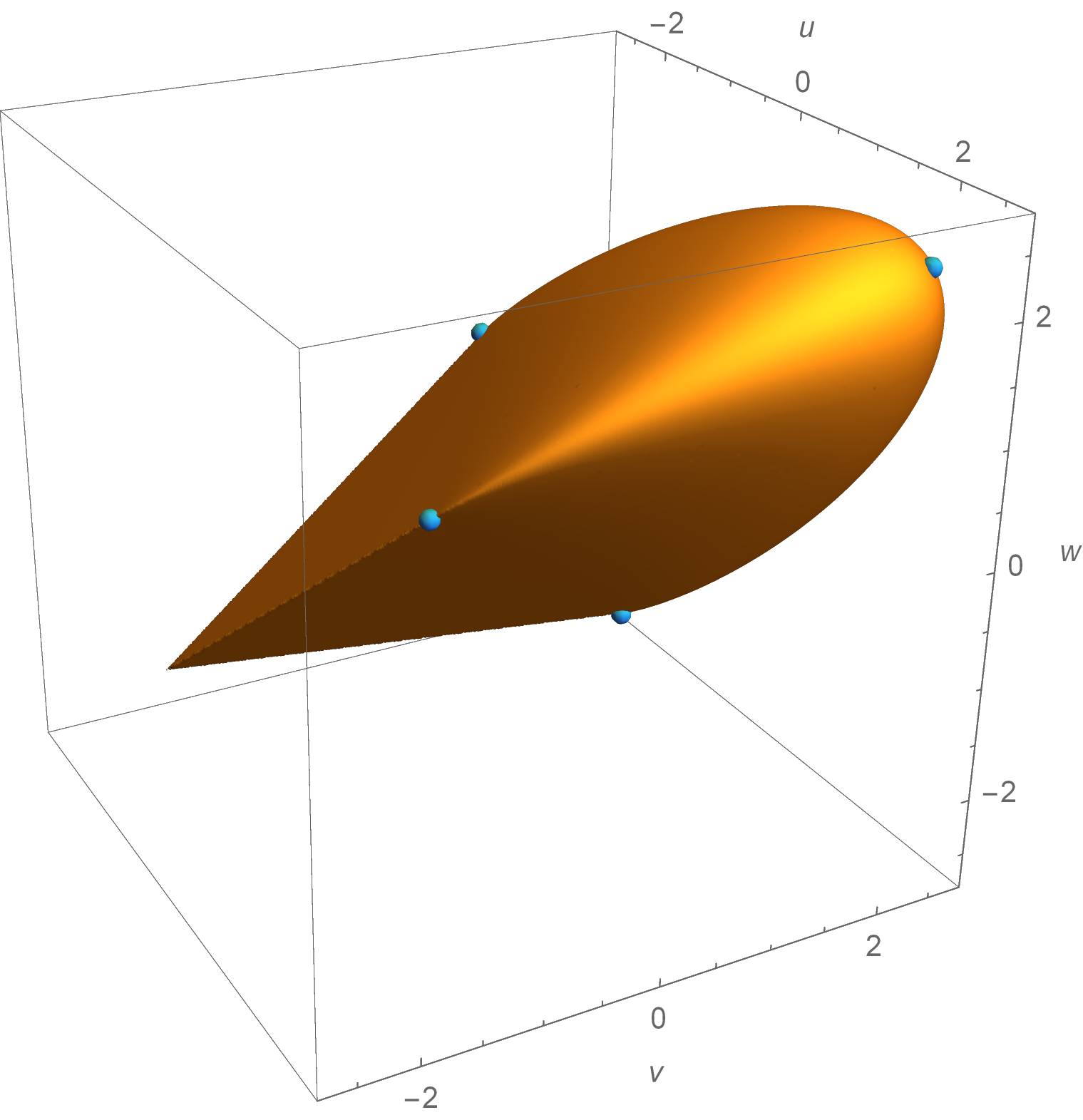} 
  \put(35,55){{\color{white}$c $}}
  \put(55,40){$c x^2 y^4 $}
  \put(29,72){$c x^4 y^2 $}  
  \put(85,77){$d p_M $}
\end{overpic}\qquad
    \caption{A three-dimensional affine slice of the four-dimensional SONC cone on the ground set of the Motzkin polynomial $p_M$. The coordinates $u$, $v$, and $w$ correspond to the representation of the slicing affine subspace as $L = p_0+ \mathrm{span}\ \{p_1,p_2,p_3\}$ with $p = p_0 + u p_1+ v p_2 + w p_3$ for each $p\in L$. Here  $p_0 = 2 + 2 x^2 y^4  + 2 x^4 y^2 + x^2 y^2$, $p_1 =  2 + x^2 y^4  - 2 x^4 y^2 - 2 x^2 y^2$, $p_2 = -2 + 2x^2 y^4  + x^4 y^2 - 2 x^2 y^2$, $p_3 =  1 - 2 x^2 y^4  + 2 x^4 y^2 - 2 x^2 y^2$. The constants are $c = 13/2$ and $d = 13/3$.}
    \label{fig:Motzkin}
\end{figure} 
where the corresponding extreme points are visible on a slice of $\SONC{\cA}$.
\begin{proof}[Proof of Proposition~\ref{prop:unexposed}]
Suppose to the contrary that under the assumptions of the proposition, the extreme ray $r=\R_+ \xb^\gammab$ is exposed. Let $(\lambda_\alpb)_{\alpb \in S}$ be the barycentric coordinates of $\betab$ with respect to $S$, and let
\[
f_t (\xb) \coloneqq \lambda_\gammab \xb^{\gammab}+ \sum_{\alpb \in S\setminus \{\gammab\}} \lambda_{\alpb} t^{\frac{1}{1-\lambda_{\gammab}}}\xb^\alpb  - t \xb^\betab.  
\]
Notice that for any $t>0$, the polynomial $f_t$ is a circuit polynomial on $(S,\betab)$ with $\Theta_{f_t} =t$, and therefore $f_t\in \SONC{\cA}$. 

By assumption, there exists some linear function $l\colon \Pspace{\cA}\to \R$ exposing the extreme ray $r$. Explicitly, this means that $l(\xb^\gammab) =0$ and $l(g)>0$ for any $g\in \SONC{\cA}\setminus r$. 

By the linearity of $l$, we have 
\begin{align*}
l(f_t) 
& = t \left(t^{\frac{\lambda_{\gammab}}{1-\lambda_{\gammab}}} \sum_{\alpb \in S\setminus \{\gammab\}} \lambda_{\alpb} l(\xb^\alpb)  -  l(\xb^\betab)\right).
\end{align*}
Because $l(\xb^\betab)>0$ ($\betab\neq \gammab$ since $|S|>1$), and the first term diminishes as $t$ goes to zero, we must have $l(f_t)<0$ for a sufficiently small $t>0$, which is a contradiction since $f_t\in \SONC{\cA}$ and thus $l(f_t)\geq 0$ for all $t>0$.
\end{proof}

\section{Exposed extreme rays}\label{sec:exposed}

Our aim in this section is to show that all extreme rays of the SONC cone that do not satisfy the assumptions of Proposition~\ref{prop:unexposed} are exposed. 
We prove this for the remaining monomial extreme rays in Proposition~\ref{prop:monomial}, and then handle the unsettled cases in Proposition~\ref{prop:nonmonomial}. 
Before stating and proving these results, we introduce several technical statements, including one about a graded partition of the ground set $\cA$, which we use in the explicit construction of exposing linear functionals.

\medskip
Recall that, just as a compact convex set is the convex hull of its extreme points (Krein-Milman theorem), any pointed closed convex cone is the conic hull of the generators of its extreme rays (see \cite[Corollary~18.5.2]{rockafellar}). Hence, to determine whether an extreme ray is exposed, it suffices to verify that equation~\eqref{eq:defexposed} holds for these generators, as formalized in the next lemma.

\begin{lemma}\label{lem:extremeChar} An extreme ray $r = \RR_+ f$ of $\SONC{\cA}$ on a finite ground set $\cA \neq \emptyset$ is exposed if there exists a linear mapping $l\colon \Pspace{\cA}\to \R$ such that 
\[
 l (f)= 0,
\]
and for any other extreme ray $q = \RR_+ g$ of $\SONC{\cA}$ 
\[
l(g)>0.
\]
\end{lemma}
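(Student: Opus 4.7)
The plan is to argue that the linear functional $l$ hypothesized in the lemma is itself an exposing functional for the ray $r$. The key tool is the observation, already highlighted in the paragraph preceding the lemma, that $\SONC{\cA}$ is a pointed closed convex cone and hence coincides with the conic hull of the generators of its extreme rays (Rockafellar, Corollary~18.5.2). This reduces the exposure condition from a continuous requirement on all of $\SONC{\cA}$ to a finite check on extreme ray generators, which is precisely what the hypothesis supplies.

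I would first verify that $l$ vanishes identically on $r$: since $r = \R_+ f$, any $x \in r$ equals $\lambda f$ for some $\lambda \geq 0$, so by linearity $l(x) = \lambda l(f) = 0$. Next, for an arbitrary $h \in \SONC{\cA}$, I would decompose $h = \sum_i c_i g_i$ with $c_i \geq 0$ and each $g_i$ a generator of some extreme ray of $\SONC{\cA}$. Linearity then gives $l(h) = \sum_i c_i\, l(g_i)$; partitioning the indices according to whether $\R_+ g_i = r$ or not, the hypothesis yields $l(g_i) = 0$ in the first case and $l(g_i) > 0$ in the second, so $l(h) \geq 0$. Moreover, $l(h) = 0$ forces $c_i = 0$ for every index with $\R_+ g_i \neq r$, so $h$ must be a nonnegative combination of positive multiples of $f$, i.e.\ $h \in r$. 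Taking the contrapositive shows $l(h) > 0$ whenever $h \in \SONC{\cA} \setminus r$, establishing the exposure condition.

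The argument is essentially a translation of the hypothesis through the extreme-ray decomposition, and the only minor obstacle is the bookkeeping needed to separate generators lying on $r$ from those on other extreme rays. The closedness and pointedness of $\SONC{\cA}$, required to invoke the decomposition theorem, are taken for granted in the paper's setup, so no deeper analytic issue arises.
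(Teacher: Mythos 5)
Your proposal is correct and follows essentially the same route as the paper: both decompose an arbitrary element of $\SONC{\cA}$ into a nonnegative combination of extreme-ray generators via \cite[Corollary~18.5.2]{rockafellar} and then apply the hypothesis termwise; the only cosmetic difference is that you argue by contrapositive ($l(h)=0$ forces $h\in r$) where the paper argues directly ($h\notin r$ yields a generator off $r$ with strictly positive value).
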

\begin{proof} Let $\cA$ be a finite ground set, $f\in \SONC{\cA}$, and suppose $l\colon \Pspace{\cA}\to \R$ is such that $r = \R_+ f$ is an extreme ray of $\SONC{\cA}$, $l (f)= 0$, and for any extreme ray $q = \RR_+ g$ with $f\notin q$ we have $l(g) >0$. 

For any $f'\in r = \R_+ f$, it holds $f' = t f$ for some $t\in \R_+$; hence $l(f') = t l(f) =0$. To prove that $l$ exposes $r$ as a face of $\SONC{\cA}$, it remains to verify that for any $h\in \SONC{\cA}\setminus r$, we have $l(h)>0$. Since $\SONC{\cA}$ is a pointed convex cone (it is easy to observe that the SONC cone does not contain lines and thus is a pointed cone; cf Proposition~\ref{prop:extreme} or see also~\cite{dressler2017positivstellensatz}), by \cite[Corollary~18.5.2]{rockafellar} we have 
\[
h = \sum_{i=1}^p \mu_i g_i,
\]
where $g_i$ spans some extreme ray of $\SONC{\cA}$ and $\mu_i>0$  for each $i\in \{1,\dots, p\}$ (because $h\notin r$, we must have $h\neq 0$). Moreover, there is some $j \in \{1,\dots, p\} $ such that $g_j\notin r$, and hence $l(g_j)>0$, with $l(g_i)\geq 0$ for all remaining $i\in \{1,\dots, p\}$. We conclude that  
\[
l(h) = \sum_{i=1}^p \mu_i l(g_i) \geq \mu_j l(g_j) >0.
\]
\end{proof}

Next, we describe a \emph{graded partition} of a finite set that we utilize in the proofs of the remaining cases.

\begin{lemma}\label{lem:graded} For a finite set $\cA\subseteq \nonneg{n}$ and a reduced circuit $(S,\betab)$ on $\cA$, there exists a \struc{\emph{graded partition}} $\struc{L_0}, \struc{L_1},\dots, \struc{L_K}$ of $(\cA\cap \even{n})\cup \{\betab\}$ such that $L_0 = S\cup \{\betab\}$,
\begin{itemize}
    \item[(i)]$L_0\cup L_1\cup \cdots \cup L_K = (\cA\cap \even{n})\cup \{\betab\}$;\quad $L_i\cap L_j = \emptyset$ $\text{ for all } i\neq j$; and
    \item[(ii)] for any circuit $(S',\gammab)\neq (S,\betab)$, $|S'|>1$ on $\cA$ with $\gammab\in (\cA\cap \even{n})\cup \{\betab\}$ there exists $\alpb \in S'$ such that $\alpb\in L_j$, $\gammab\in L_i$ with $j>i$.
\end{itemize}
\end{lemma}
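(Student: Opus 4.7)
The plan is to build the partition by successively peeling off convex hull vertices from outside in. Set $T \coloneqq (\cA \cap \even{n}) \cup \{\betab\}$ and $U \coloneqq T \setminus L_0$, where $L_0 \coloneqq S \cup \{\betab\}$. Starting from $M = T$, I repeatedly extract the vertices of $\conv(M)$ that lie in $U$, remove them as a new shell, and continue until $M = L_0$. Listing the shells from outermost to innermost as $V_1, V_2, \ldots, V_K$ and setting $L_k \coloneqq V_{K-k+1}$ yields the desired grading, with $L_K$ the outermost layer (peeled off first) and $L_1$ the layer nearest $L_0$.

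For well-definedness I must check that every stage with $M \supsetneq L_0$ produces a nonempty shell. Any $u \in M \cap U$ is an even point of $\cA$ outside $L_0$, so by the reduced circuit hypothesis $\conv(S) \cap \cA \cap \even{n} = (S \cup \{\betab\}) \cap \even{n}$, such a $u$ cannot lie in $\conv(S) = \conv(L_0)$. Hence $\conv(M) \supsetneq \conv(L_0)$, forcing at least one vertex of $\conv(M)$ to lie outside $\conv(L_0)$ and therefore in $U$. Since $T$ is finite, the procedure terminates and produces a partition as required in (i).

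The key observation for verifying (ii) is that whenever $\gammab$ lies in layer $L_i$ with $i \geq 1$, or whenever $\gammab \in S \subseteq L_0$ (taking $i = 0$ in this case), the point $\gammab$ is an extreme point of $\conv(L_0 \cup L_1 \cup \cdots \cup L_i)$---by the peeling construction in the former case, and because $S$ is the vertex set of the simplex $\conv(L_0) = \conv(S)$ in the latter. For any circuit $(S', \gammab)$ with $|S'| > 1$ and $S' \subseteq L_0 \cup L_1 \cup \cdots \cup L_i$, the point $\gammab \in \relint(\conv(S'))$ is not itself an element of $S'$, so it would be a convex combination of points in $(L_0 \cup \cdots \cup L_i) \setminus \{\gammab\}$, contradicting its extremality. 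Therefore some $\alpb \in S'$ must lie in $L_{i+1} \cup \cdots \cup L_K$, as (ii) demands.

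The one case the extremality argument misses is $\gammab = \betab$, since $\betab$ lies in $\relint(\conv(S))$ and is consequently \emph{not} extreme in $\conv(L_0)$; this will be the main technical point. Here I will exploit the hypothesis $(S', \gammab) \neq (S, \betab)$ directly: if $S' \subseteq L_0$ were to hold, then $\betab \notin S'$ (because $\betab \in \relint(\conv(S'))$), so $S' \subseteq S$, and the uniqueness of the barycentric coordinates of $\betab$ in the affinely independent set $S$ would force $S' = S$, giving $(S', \gammab) = (S, \betab)$---a contradiction. Hence once more $S' \not\subseteq L_0$ and some $\alpb \in S'$ lies in a strictly higher layer, completing the verification of (ii).
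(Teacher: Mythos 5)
Your proof is correct and follows essentially the same route as the paper's: the same ``peel off convex-hull vertices outside $S\cup\{\betab\}$'' construction for the layers, the same extremality argument for (ii), and the same special handling of $\gammab=\betab$ via $S'\not\subseteq S$. If anything, you are more explicit than the paper about the one place where reducedness is genuinely needed (to guarantee each peeling step removes a point when $M\supsetneq L_0$), which is a welcome clarification rather than a deviation.
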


\begin{proof}

Let $\cA\subseteq \nonneg{n}$ be a finite set, and let $(S,\betab)$ be a reduced circuit on $\cA$. Let $L_0 \coloneqq S\cup \{\betab\}$, $E_0 \coloneqq \cA\cap \even{n}\cup \{\betab\}$, and define
\[
E_{i+1} \coloneqq E_{i}\setminus (V(E_{i})\setminus S)\quad \text{ for all } \, i\in \nonneg{}.
\]
In other words, $E_{i+1}$ is constructed by removing the extreme points of $\conv(E_i)$, except for the ones that belong to $L_0$ (see Fig.~\ref{fig:graded} for an illustrative example). 
\begin{figure}[ht]
    \centering
    \begin{overpic}[width=0.45\textwidth]{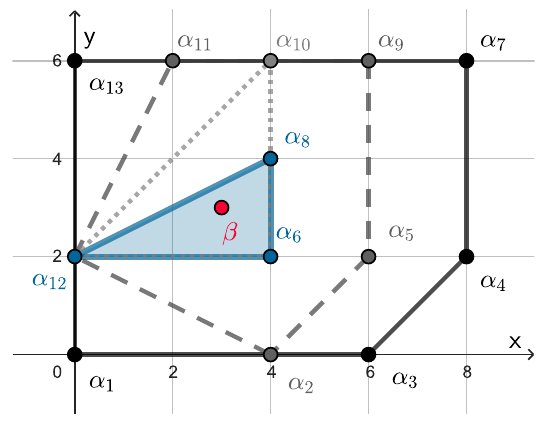} 
  \put(87,58){$E_0$}
  \put(70,58){$E_1$}
  \put(52,58){$E_2$}
  \put(27,25){$E_3=L_0$}
\end{overpic}
    \begin{overpic}[width=0.45\textwidth]{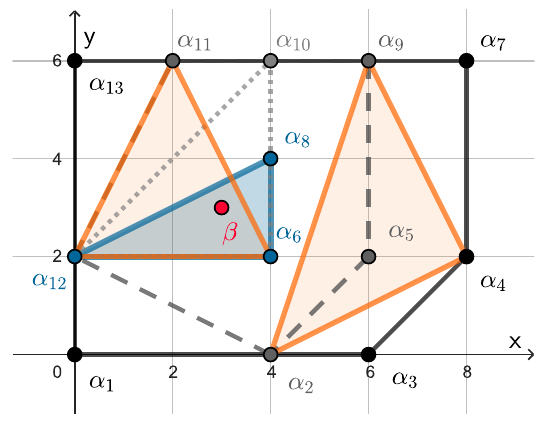} 
  \put(87,58){$L_3$}
  \put(75,58){$L_2$}
  \put(52,58){$L_1$}
  \put(32,25){$L_0$}
\end{overpic}
    \caption{An example of a graded partition from the proof of Lemma~\ref{lem:graded} for $\cA = \{\alpb_1,\dots,\alpb_{13},\betab\}$ and $S = \{\alpb_6,\alpb_8,\alpb_{12}\}$. The sets $E_0$, $E_1$, $E_2$, and $E_3$ are shown as the outlines of the boundaries of their convex hulls. For the circuit $S'=(\{\alpb_6,\alpb_{11},\alpb_{12}\},\betab)$ we have $\gammab  =\betab\in L_0$ and $\alpb \in S'\setminus S$ with $\alpb 
    = \alpb_{11}\in L_2$, while for $(\{\alpb_2,\alpb_4,\alpb_{9}\},\alpb_5)$ we have $\gammab = \alpb_5 \in L_2$ and $\alpb = \alpb_4\in L_3$.}
    \label{fig:graded}
\end{figure} 

For $i\in \nonneg{}$, we have $E_{i+1}=E_{i}$ if and only if $E_i = S \cup \{\betab\}=L_0$. Furthermore, if $L_0\subsetneq E_i$, then $L_0\subsetneq V(E_i)$, and $E_i\subsetneq E_{i+1}$. We conclude that there exists some $K\in \nonneg{}$ such that 
\[
 E_K = L_0; \quad E_{i}\subsetneq E_{i-1} \quad \text{ for all } i \in \{1,\dots, K\}.
 \]
 Moreover, for every $\alpb \in  E_0 \setminus L_0$ there must exist some $i\in \{1,\dots, K\}$ such that $\alpb \in E_{i-1}\setminus E_{i}$. 
We set 
\[
L_{K-i+1} \coloneqq  E_{i-1}\setminus E_{i} \quad \text{ for all } i\in \{1,\dots, K\}.
\]
To see that $L_0,L_1,\dots, L_K$ satisfy (i), observe that by construction these sets are disjoint and 
\[
L_1\cup \cdots \cup L_K = (E_{K-1}\setminus E_{K})\cup \cdots \cup (E_{i-1}\setminus E_{i})\cup \cdots (E_{0}\setminus E_{1}) = E_0\setminus E_K = E_0\setminus L_0,
\]
therefore 
\[
L_0\cup L_1\cup \cdots \cup L_K = E_0 = \cA\cap \even{n}\cup \{\betab\}.
\]
\smallskip

It remains to prove (ii). The subsequent case distinction is illustrated on the right-hand side of Fig.~\ref{fig:graded}.

Suppose that $(S',\gammab)$ is a circuit on $\cA$, such that $(S',\gammab)\neq (S,\betab)$ and $|S'|>1$. If $\gammab = \betab$, then we must have some $\alpb \in S'\setminus S$ (otherwise $S'\subset S$ and $(S,\betab)$ cannot be a circuit). Since $\alpb \in E_0$, there must be some $i$ such that $\alpb \in L_i$ with $i>0$ because $\alpb\notin L_0= S\cup \{\betab\}$
and we have $\gammab \in L_0$.

In the case when $\gammab \neq \betab$, we have $\gammab\in \cA\cap \even{n}$, and hence there is some $i\in \{0,\dots, K\}$ such that $\gammab \in L_i$. We need to show that there is $\alpb \in S'$ such that $\alpb\in L_j$ with $j>i$. Suppose that this is not true. Then for every $\alpb\in S'$, we must have $\alpb \in L_{j}$ with $j\leq i$. By our construction, it means that either $\gammab \in S$ (in which case $\gammab\in L_0$ and $S'\subset L_0$, an impossibility) or $\gammab \in V(E_{i})\setminus S$, and $S'\subset E_i$. Since $\gammab$ is a vertex of $\conv(E_i)$, the only possible way to represent $\gammab$ as a convex combination of points in $E_i$ (including $S$) is $\gammab = 1\cdot \gammab$, which contradicts $|S'|>1$.
\end{proof}

The following result specifies a particular positive integer that is subsequently employed in the definition of the exposing functionals. 
\begin{lemma}\label{lem:Lambda}
Given a nonempty finite set $\cA\subseteq\nonneg{n}$, there exists an integer $\struc{\Lambda>0}$ such that for any circuit $(S,\betab)$ on $\cA$, we have $\lambda_\alp \cdot \Lambda >1$ for all $\alpb \in S$, with $(\lambda_\alpb)_{\alpb\in S}$ being the barycentric coordinates of $\betab$.
\end{lemma}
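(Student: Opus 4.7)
The plan is to exploit the finiteness of $\cA$ to reduce the claim to a minimization over a finite set of strictly positive reals. First I would observe that since $\cA$ is finite, there are only finitely many circuits $(S,\betab)$ on $\cA$: indeed, each such circuit is determined by a subset $S\subseteq \cA$ together with a point $\betab\in \cA$, so the collection of circuits on $\cA$ has cardinality at most $2^{|\cA|}\cdot |\cA|$.

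Next I would use the definition of a (simplicial) circuit recalled in Section~\ref{subsec:nncircuits}: the set $S$ is affinely independent and $\betab$ lies in the \emph{relative interior} of $\conv(S)$. Consequently the (unique) barycentric coordinates $(\lambda_\alpb)_{\alpb\in S}$ satisfy $\lambda_\alpb>0$ for every $\alpb\in S$. Define
\[
\lambda_{\min} \;\coloneqq\; \min\bigl\{\lambda_\alpb \with (S,\betab) \text{ is a circuit on } \cA,\ \alpb\in S\bigr\}.
\]
This minimum is taken over a finite nonempty collection of strictly positive numbers (vacuously true if there are no circuits on $\cA$, in which case any $\Lambda>0$ works), so $\lambda_{\min}>0$.

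Finally I would set $\Lambda \coloneqq \lfloor 1/\lambda_{\min}\rfloor + 1$, which is a positive integer satisfying $\Lambda > 1/\lambda_{\min}$, and hence $\lambda_\alpb \cdot \Lambda \geq \lambda_{\min} \cdot \Lambda > 1$ for every circuit $(S,\betab)$ on $\cA$ and every $\alpb\in S$. There is no real obstacle here; the only thing to be careful about is to justify that the minimum is attained over a finite set and is strictly positive, which follows directly from the simpliciality assumption built into the notion of a circuit in this paper.
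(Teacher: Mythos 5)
Your proposal is correct and follows essentially the same route as the paper: both arguments take the minimum barycentric coordinate over the finitely many circuits on $\cA$, note it is strictly positive, and choose $\Lambda$ slightly larger than its reciprocal. If anything, your choice $\Lambda = \lfloor 1/\lambda_{\min}\rfloor + 1$ is marginally more careful, since the paper's $\Lambda = \lambda^{-1}+1$ need not be an integer as the statement requires.
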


\begin{proof}
Let $C(\cA)$ be the set of all circuits on $\cA$, and for every circuit $(S,\betab)\in C(\cA)$, let $\lambda(S,\betab)$ be the smallest barycentric coordinate of $\betab$ with respect to $S$. Note that $\lambda(S,\betab)>0$.  The number of circuits $|C(\cA)|$ is finite due to the finiteness of $\cA$, thus
\[
\lambda \coloneqq \inf_{(S,\betab)\in C(\cA)}\lambda(S,\betab) = \min_{(S,\betab)\in C(\cA)}\lambda(S,\betab) >0.
\]
Let $\struc{\Lambda} \coloneqq \lambda^{-1}+1$. By construction, for any circuit $(S,\betab)$ on $\cA$ and for any barycentric coordinate $\lambda_\alpb$ of $\betab$ (with respect to $S$), we have 
\[
\Lambda\cdot \lambda_\alp \geq \Lambda\cdot \lambda(S,\beta) \geq \Lambda \cdot \lambda = (\lambda^{-1}+1)\lambda = 1 + \lambda>1.
\]
\end{proof}

Recall the \emph{\struc{weighted AM-GM inequality}}, see e.g.~\cite[Section 2.5]{Hardy:Littlewood:Polya}.
\begin{proposition}[weighted AM-GM inequality]\label{prop:AMGM}
    Let $a_1, a_2, \dots, a_m$ be nonnegative real numbers, and suppose $w_1, w_2, \dots, w_m$ are nonnegative real numbers such that $w_1 + w_2 + \dots + w_m = 1$. Then
\[
w_1 a_1 + w_2 a_2 + \dots + w_m a_m \geq a_1^{w_1} a_2^{w_2} \dots a_m^{w_m},
\]
with equality if and only if $a_1 = a_2 = \dots = a_m$ and $w_i > 0$ for all $i \in \{1,\dots, m\}$.
\end{proposition}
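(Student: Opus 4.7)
The plan is to reduce the weighted AM-GM inequality to Jensen's inequality applied to the concave function $\log$. First, I would dispose of the degenerate cases: if some $a_i = 0$ while $w_i > 0$, then the right-hand side $\prod_j a_j^{w_j}$ equals $0$ while the left-hand side is nonnegative, so the inequality is immediate; and indices with $w_i = 0$ contribute $0$ to the sum and $1$ to the product and can simply be dropped. After these reductions we may assume all $w_i > 0$ and all $a_i > 0$.

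Under these assumptions, taking logarithms transforms the desired inequality into
$$
\log\left(\sum_{i=1}^m w_i a_i\right) \geq \sum_{i=1}^m w_i \log a_i,
$$
which is precisely Jensen's inequality for the concave function $\log\colon (0,\infty) \to \RR$ applied to the convex combination with weights $w_i$ and evaluation points $a_i$. Exponentiating both sides recovers the stated inequality.

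For the equality clause, strict concavity of $\log$ on $(0,\infty)$ implies that equality in Jensen's inequality forces all $a_i$ indexed by positive weights to coincide. Combined with the reduction above, this yields the stated characterization. The only potential obstacle is the careful bookkeeping of the boundary cases (zero values and zero weights); the inequality itself is an essentially immediate consequence of the concavity of $\log$. A self-contained alternative would be induction on $m$: the base case $w_1 a_1 + w_2 a_2 \geq a_1^{w_1} a_2^{w_2}$ with $w_1 + w_2 = 1$ can be handled directly by minimising $f(t) = w_1 t + w_2 - t^{w_1}$ over $t > 0$ via a one-variable derivative argument, and the inductive step follows by grouping the first $m-1$ terms into a single weighted average and reapplying the two-variable case.
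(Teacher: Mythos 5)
Your proof is correct and standard; note that the paper does not actually prove this proposition at all --- it is quoted as a classical fact with a citation to Hardy--Littlewood--P\'olya, so there is no in-paper argument to compare against. The reduction to Jensen's inequality for the strictly concave $\log$ (after discarding zero weights and handling zero values separately) is the canonical route, and the inductive alternative you sketch is equally fine. One small remark on the equality clause: what your Jensen argument genuinely delivers is that equality holds if and only if all $a_i$ with $w_i>0$ coincide, which is the correct characterization; the proposition as stated in the paper additionally demands $w_i>0$ for \emph{all} $i$, and its ``only if'' direction fails in that form (take $m=2$, $w=(1,0)$, $a=(1,5)$: both sides equal $1$). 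This is an imprecision in the statement rather than in your proof, and it is harmless for the paper's application, where the weights are barycentric coordinates and hence all strictly positive.
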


\begin{proposition}\label{prop:monomial}
    If $r = \R_+ \xb^{\gammab}$ is an extreme ray of $\SONC{\cA}$ on a finite set $\cA\subseteq \nonneg{n}$, and there is no circuit $(S,\betab)$ on $\cA$ such that $|S|>1$,  $\gammab\in S$, and $\betab$ even, then $r$ is exposed.
\end{proposition}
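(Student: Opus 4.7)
The plan is to apply Lemma~\ref{lem:extremeChar} and exhibit an explicit linear functional $l\colon \Pspace{\cA}\to \R$ with $l(\xb^\gammab)=0$ and $l(g)>0$ for every extreme ray generator $g$ distinct from $\xb^\gammab$. Since such a functional is determined by the weights $w_\alpb\coloneqq l(\xb^\alpb)$ on the monomial basis, I would set $w_\gammab=0$, put $w_\alpb=0$ for every $\alpb\in\cA\setminus\even{n}$, and let $w_\alpb=e^{\|\alpb\|^2}$ for $\alpb\in \cA\cap\even{n}$ with $\alpb\neq\gammab$.

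Verification then proceeds by inspecting the three types of extreme ray generators from Proposition~\ref{prop:extreme}. Type~1 rays $\R_+\xb^\alpb$ with $\alpb\neq\gammab$ satisfy $l(\xb^\alpb)=w_\alpb>0$ directly. For Type~3 rays $\betab$ is odd, so $l(\xb^\betab)=0$ and $l(f)=\sum_{\alpb\in S}c_\alpb w_\alpb$; since $|S|>1$, at least one $\alpb\in S$ differs from $\gammab$ and contributes $c_\alpb w_\alpb>0$. Type~2 rays split by the parity and identity of $\betab$: when $\betab$ is odd the Type~3 argument applies verbatim; when $\betab$ is even with $\betab=\gammab$ one has $\gammab=\betab\notin S$ and $l(f)=\sum c_\alpb w_\alpb>0$ once again.

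The pivotal subcase is Type~2 with $\betab$ even and $\betab\neq\gammab$. Here the hypothesis of the proposition is used decisively: the absence of any even circuit containing $\gammab$ in $S$ forces $\gammab\notin S$, so every $\alpb\in S$ has $w_\alpb>0$. The weighted AM-GM inequality (Proposition~\ref{prop:AMGM}) applied with weights $\lambda_\alpb$ and quantities $c_\alpb w_\alpb/\lambda_\alpb$ yields
\[
\sum_{\alpb\in S} c_\alpb w_\alpb \;=\; \sum_{\alpb\in S} \lambda_\alpb\cdot\frac{c_\alpb w_\alpb}{\lambda_\alpb} \;\geq\; \prod_{\alpb\in S}\left(\frac{c_\alpb w_\alpb}{\lambda_\alpb}\right)^{\lambda_\alpb} \;=\; \Theta_f\,\prod_{\alpb\in S} w_\alpb^{\lambda_\alpb} \;=\; \Theta_f\,e^{\sum_{\alpb\in S}\lambda_\alpb\|\alpb\|^2}.
\]
Strict convexity of $\|\cdot\|^2$ together with $|S|>1$ and Jensen's inequality gives $\sum_{\alpb\in S}\lambda_\alpb\|\alpb\|^2 > \|\sum_{\alpb\in S}\lambda_\alpb\alpb\|^2=\|\betab\|^2$, so the right-hand side strictly exceeds $\Theta_f\,w_\betab$. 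Hence $l(f)>0$.

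The main obstacle is precisely this last subcase, and the role of the hypothesis is transparent: it rules out the one configuration --- a circuit with $\gammab\in S$ and $\betab$ even --- in which $w_\gammab=0$ would collapse the AM-GM lower bound to zero and leave no room for a strict inequality, mirroring the obstruction identified in Proposition~\ref{prop:unexposed}. A pleasant feature of this construction is that the strict convexity of the squared norm provides a single concave separation controlling every circuit simultaneously, so the graded-partition machinery of Lemma~\ref{lem:graded} is not needed for the monomial case.
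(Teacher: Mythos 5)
Your proof is correct, but it takes a genuinely different route from the paper for the decisive subcase. The paper builds its exposing functional from the graded partition of Lemma~\ref{lem:graded} together with the integer $\Lambda$ of Lemma~\ref{lem:Lambda}, assigning weights $2^{\Lambda^i}$ to the layers $L_i$; the strict inequality for an even circuit $(S,\betab)$ then comes from the combinatorial fact that some $\bar\alpb\in S$ sits in a strictly higher layer than $\betab$, so a single rapidly growing term dominates. You replace this entire mechanism with the weights $w_\alpb=e^{\|\alpb\|^2}$: after the same weighted AM-GM step (identical to the paper's derivation of \eqref{eq:9070}), the required inequality $\prod_{\alpb\in S}w_\alpb^{\lambda_\alpb}>w_\betab$ reduces to strict Jensen for the strictly convex function $\|\cdot\|^2$, using that the points of $S$ are distinct, all $\lambda_\alpb>0$, and $\betab=\sum\lambda_\alpb\alpb$. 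This is a clean simplification: a single strictly convex weight function separates the inner point of every circuit from its outer points simultaneously, and you correctly observe that the hypothesis of the proposition is needed exactly to guarantee $\gammab\notin S$ for even circuits, so that no weight in the AM-GM product vanishes. Your case analysis (Type 1; Type 3 and odd Type 2 via $l(\xb^\betab)=0$ and $|S|>1$; even Type 2 with $\betab=\gammab$ via $\betab\notin S$) is complete and matches the enumeration in Proposition~\ref{prop:extreme}. The one thing the paper's heavier machinery buys is reuse: the graded partition and $\Lambda$ are needed again in Proposition~\ref{prop:nonmonomial}, where the functional must additionally vanish on a full circuit polynomial and the weights on $S\cup\{\betab\}$ are pinned down, so the convex-weight trick does not transfer directly there; for the monomial case your argument stands on its own.
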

\begin{proof} Let $\xb^\gammab$ and $\cA$ satisfy the assumptions of the proposition. We prove that the extreme ray $r = \R_+ \xb^\gammab$ is exposed by constructing an explicit exposing linear mapping $l\colon \Pspace{\cA}\to \R$.

By Lemma~\ref{lem:graded}, there exists a graded partition $L_0,L_1,L_2,\dots, L_K$ of $\cA\cap \even{n}\cup \{\gammab\}$ such that $L_0   = \{\gammab\}$, and for any circuit $(S,\betab)$ with $\betab\in \cA\cap \even{n}\cup \{\gammab\}$, we have $\betab\in L_i$, and some $\bar \alpb\in S$ is such that $\bar \alpb\in L_j$ with $j>i$. We define
\begin{equation}\label{eq:defNormalMonomial}
l(\xb^\alpb) \coloneqq \begin{cases}
    2^{\Lambda^i}, & \text{ if } \alpb \in L_i \quad \text{for some } i\in \{1,\dots, K\},\\
    0, & \text{otherwise},
\end{cases}
\end{equation}
where $\Lambda$ is chosen as in Lemma~\ref{lem:Lambda}. To demonstrate that the linear mapping $l$ is exposing the extreme ray $r$, by Lemma~\ref{lem:extremeChar}, it is sufficient to show that $l(\xb^\gammab)=0$ (which is true by definition of $l$) and that for any other extreme ray $q=\R_+f$ of $\SONC{\cA}$, we have $l(f)>0$. We need to verify this inequality for all extreme rays listed in Proposition~\ref{prop:extreme}.

For any other monomial extreme ray $q = \R_+ \xb^\alpb$, where $\alpb \in (\cA\setminus \{\gammab\})\cap \even{n}$, there is an $i$ with $\alpb \in L_i$; hence, by \eqref{eq:defNormalMonomial}, we have $l(\xb^\alpb)>0$.

For any odd non-monomial extreme ray $q = \R_+ f$, where $f$ is a circuit polynomial on the odd circuit $(S,\betab)$, we have  
\[
f(\xb) = \sum_{\alpb \in S} c_\alpb 
\xb^\alpb \pm \Theta_f \xb^\betab,
\]
and since $l(\xb^\betab) = 0$, 
\[
l(f) = \sum_{\alpb \in S} c_\alpb l(\xb^\alpb) >0.
\]    

The only remaining type of extreme ray is $q = \R_+ f$, where 
\begin{equation}\label{eq:34543}
f(\xb) = \sum_{\alpb \in S} c_\alpb 
\xb^\alpb - \Theta_f \xb^\betab,
\end{equation}
and $(S,\betab)$ is an even circuit. Let $(\lambda_\alpb)_{\alpb\in S}$ be the barycentric coordinates of $\betab$. Then, by Proposition~\ref{prop:AMGM} we have 
\begin{align}\label{eq:44535}
    \sum_{\alpb \in S} c_\alpb l(\xb^\alpb) = \sum_{\alpb \in S} \lambda_\alpb \frac{c_\alpb}{\lambda_\alpb } l(\xb^\alpb) \geq \prod_{\alpb \in S} \left( \frac{c_\alpb l(\xb^\alpb) }{\lambda_\alpb } \right)^{\lambda_\alpb} = \Theta_f\prod_{\alpb \in S} \left( l(\xb^\alpb)\right)^{\lambda_\alpb}.
\end{align}
From \eqref{eq:34543}, \eqref{eq:44535}, and the nonnegativity of $l$ on extreme rays, we have
\begin{equation}\label{eq:9070}
l(f) \geq \Theta_f\left(\prod_{\alpb \in S} \left( l(\xb^\alpb)\right)^{\lambda_\alpb}- l(\xb^\betab)\right).    
\end{equation}

For every even $\alpb$, there is some $k$ such that $\alpb \in L_k$, and therefore
\begin{equation}\label{eq:9071}
\left( l(\xb^\alpb)\right)^{\lambda_\alpb} 
= \left(2^{\Lambda^{k}}\right)^{\lambda_\alpb} 
> 1 \quad \text{ for all } \alpb \in S.
\end{equation}
Moreover, by our construction, there is some $\bar \alpb\in S$ with $\betab \in L_i$, $\alpb\in L_j$, and $i<j$. Recalling that $|S|>1$, together with \eqref{eq:9070} and  \eqref{eq:9071} this yields  
\[
l(f) > \Theta_f\left(\left( l(\xb^{\bar\alpb})\right)^{\lambda_{\bar\alpb}}- l(\xb^\betab)\right)> \Theta_f(2^{\Lambda^{j-1}}-2^{\Lambda^i}) \geq 0.
\]
\end{proof}

\begin{proposition}\label{prop:nonmonomial} Every non-monomial extreme ray of the SONC cone on a finite ground set $\cA$ is exposed.
\end{proposition}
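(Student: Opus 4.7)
The plan is to construct an explicit exposing functional $l$ for the ray $\R_+ f$, adapting the strategy of Proposition~\ref{prop:monomial} to a non-monomial target ray. Let $(S,\betab)$ be the reduced circuit underlying $f$ with barycentric coordinates $(\lambda_\alpb)_{\alpb \in S}$; applying Lemma~\ref{lem:graded} to $(S,\betab)$ produces a graded partition $L_0, L_1, \dots, L_K$ of $(\cA \cap \even{n}) \cup \{\betab\}$ with $L_0 = S \cup \{\betab\}$, and let $\Lambda$ be as in Lemma~\ref{lem:Lambda}. Setting $\sigma = +1$ when $f$ is of type~2 and $\sigma = -1$ when $f$ is of type~3, I would define
\[
l(\xb^\alpb) = \begin{cases}
\lambda_\alpb/c_\alpb & \text{if } \alpb \in S,\\
\sigma/\Theta_f & \text{if } \alpb = \betab,\\
T^{\Lambda^i} & \text{if } \alpb \in L_i,\ i \in \{1,\dots, K\},\\
0 & \text{if } \alpb \in \cA \text{ is odd and } \alpb \neq \betab,
\end{cases}
\]
where $T > 1$ is a parameter to be fixed later. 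The values on $L_0$ are precisely those realizing the AM-GM equality case for $f$, so that a direct computation using $\sum_{\alpb \in S} \lambda_\alpb = 1$ yields $l(f) = 0$ in both types.

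Given Lemma~\ref{lem:extremeChar}, it remains to check $l(g) > 0$ for every other extreme ray $g$ from the classification of Proposition~\ref{prop:extreme}. Monomial rays $\xb^\gammab$ satisfy $l(\xb^\gammab) > 0$ because $\gammab \in \cA \cap \even{n}$ lies in some $L_i$. Circuit rays centered at an odd $\gammab \neq \betab$ satisfy $l(\xb^\gammab) = 0$, so $l(g) = \sum_{\alpb \in S'} c_\alpb' l(\xb^\alpb) > 0$ since every $\alpb \in S'$ is even. For circuit rays $g$ on the same circuit $(S,\betab)$ and of the same type as $f$, the weighted AM-GM inequality (Proposition~\ref{prop:AMGM}) gives $l(g) \geq 0$, with equality precisely when $c_\alpb' = s\, c_\alpb$ for some $s > 0$, i.e., when $g \in \R_+ f$; those of the opposite type yield $l(g)$ as a sum of manifestly positive terms.

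The main obstacle is the remaining case: circuit rays on a different circuit $(S',\gammab) \neq (S,\betab)$ with $\gammab \in L_i$ for some $i$. Lemma~\ref{lem:graded} produces $\bar\alpb \in S' \cap L_j$ with $j > i$. An AM-GM estimate reduces the positivity of $l(g)$ to the dominance inequality $\prod_{\alpb \in S'} l(\xb^\alpb)^{\lambda_\alpb'} > l(\xb^\gammab)$. The leading factor $l(\xb^{\bar\alpb})^{\lambda_{\bar\alpb}} = T^{\Lambda^j \lambda_{\bar\alpb}}$ outweighs $l(\xb^\gammab)$ thanks to Lemma~\ref{lem:Lambda} together with $j > i$; however, unlike in the monomial case of Proposition~\ref{prop:monomial}, the product picks up fixed positive factors coming from entries of $S' \cap L_0$ that may be smaller than one. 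I would absorb these by choosing $T$ large enough so that the strict dominance holds simultaneously for all finitely many circuits on $\cA$. Invoking Lemma~\ref{lem:extremeChar} then completes the proof.
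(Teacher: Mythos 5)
Your proposal is correct and follows essentially the same route as the paper: the same graded partition from Lemma~\ref{lem:graded}, the same structure of the exposing functional (AM-GM-equality values on $S\cup\{\betab\}$ with the sign at $\betab$ flipped against $f$'s, and values $T^{\Lambda^i}$ on the higher layers), and the same case analysis via Proposition~\ref{prop:AMGM}. The only difference is how the potentially small factors from $S'\cap L_0$ are absorbed: the paper rescales the whole functional by a large $\sigma^{-1}$ so that every even monomial has $|l(\xb^\alpb)|>1$, whereas you take the base $T$ large enough to dominate the finitely many fixed constants --- both are valid.
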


\begin{proof} Let $(S,\betab)$ be a reduced circuit with $|S|>1$, and let $f$ be a circuit polynomial on this circuit, generating one of the non-monomial extreme rays (that is, one of the extreme rays given in Proposition~\ref{prop:extreme} items 2 and 3). We have 
\[
f(\xb) = \sum_{\alpb \in S} c_\alpb 
\xb^\alpb \pm \Theta_f \xb^\betab,
\]
with $c_\alpb>0$ for all $\alpb \in S$. To show that $r = \R_+ f$ is an exposed ray, we construct the exposing linear mapping $l$ explicitly using a similar idea to the proof of Proposition~\ref{prop:monomial}. Choose some $\sigma, \delta>0$ such that 
\[
\sigma < \min\left \{\min_{\alpb\in S} \frac{\lambda_\alpb}{c_\alpb}, \Theta^{-1}_f\right\},\quad 
\delta> \max \left\{\max_{\alpb\in S}  \frac{\lambda_\alpb}{c_\alpb}, \Theta^{-1}_f
\right\}.\]
By Lemma~\ref{lem:graded}, there exists a graded partition $L_0,L_1,\dots, L_K$ of $(\cA\cap \even{n})\cup \{\betab\} $ such that $L_0 = S\cup \{\betab\}$ and for any circuit $(S',\gammab)$, where $\gammab\in (\cA\cap \even{n})\cup \{\betab\}$, there is an $\alpb'\in S'$ such that $\gammab\in L_i$, $\alpb'\in L_j$ with $j>i$. 
Let 
\begin{equation}\label{eq:constrNormals34}
l(\xb^\alpb) \coloneqq  \begin{cases}
    \sigma^{-1} \frac{\lambda_\alpb}{c_\alpb}, & \text{ if } \alpb \in S;\\
    \mp \sigma^{-1}\Theta^{-1}_f, & \text{ if } \alpb = \betab;\\
    (\sigma^{-1}\delta)^{\Lambda^i}, & \text{ if } \alpb \in L_i, \; i \in \{1,\dots, K\};\\    
    0, & \text{otherwise (when } \alpb \in \cA\setminus (\even{n}\cup \{\betab\}) \text{)}.
\end{cases}
\end{equation}
In the case $\alpb=\betab$, we specifically want the value $l(\xb^\betab)$ to have the opposite sign to the coefficient of $f$ at $\xb^\betab$. In particular, this means that the only case when $l(\xb^\gammab)$ is negative is when $\gammab = \betab$ and $\betab$ is odd (see Proposition~\ref{prop:extreme}, items 2 and 3). Observe that due to our choice of $\sigma$ and $\delta$, we have 
\[
|l(\xb^\gammab)|> 1 \quad  \text{ for all } \, \gammab\in (\cA\cap \even{n})\cup \{\betab\}.
\]
To prove that $r = \R_+ f$ is an exposed ray of $\SONC{\cA}$, by Lemma~\ref{lem:extremeChar} it is sufficient to show that for any extreme ray $q = \R_+ g$ of $\SONC{\cA}$, we have $l(g)>0$, unless $g\in \R_{+}f$, in which case we must have $l(g) =0$. We verify this by systematically considering all possible extreme rays of $\SONC{\cA}$ listed in Proposition~\ref{prop:extreme}.

\smallskip
The case $q = \R_+ \xb^\gammab$ with $\gammab\in \even{n}$ is trivial: by \eqref{eq:constrNormals34} we have $l(\xb^\gammab)>0$. 

\smallskip

It remains to consider all type 2 and 3 extreme rays given in Proposition~\ref{prop:extreme}, that is, 
\[
g(\xb) = \sum_{\alpb \in S'} c'_\alpb \xb^\alpb \pm \Theta_g \xb^\gammab,
\]
where $(S',\gammab)$ is a reduced circuit on $\cA$ with $|S'|>1$.

\smallskip

We consider the cases $(S',\gammab) = (S,\betab)$ and $(S',\gammab)\neq (S,\betab)$ separately.

\smallskip

Firstly, when $(S',\gammab) = (S,\betab)$, we have 
\[
g(\xb) = \sum_{\alpb \in S} c'_\alpb \xb^\alpb \pm \Theta_g \xb^\betab.
\]
If the signs of the coefficient at the monomial $\xb^\betab$ are opposite in $g$ and $f$, then we immediately have $l(g)>0$. If the sign is the same, then from our definition of $l$ in \eqref{eq:constrNormals34} 
\[
l(g) = \sigma^{-1}\left(\sum_{\alpb \in S} \lambda_\alpb \frac{c'_\alpb}{c_\alpb}  - \prod_{\alpb\in S} \left(\frac{c'_\alpb}{c_\alpb}  \right)^{\lambda_\alpb}\right).
\]
By Proposition~\ref{prop:AMGM} the expression inside the brackets is zero if and only if the coefficients $(c'_\alpb)_{\alpb \in S}$ are a (positive) multiple of the coefficients $(c_\alpb)_{\alpb \in S}$, meaning that $l(g)>0$ if and only if $g$ generates a different extreme ray than $f$. 

\smallskip

The only remaining case to consider is when $g$ is a polynomial on some reduced circuit $(S',\gammab)\neq (S,\betab)$, $|S'|>1$. We have 
\[
g(\xb) = \sum_{\alpb \in S'} c'_\alpb \xb^\alpb \pm \Theta_g \xb^\gammab,
\]
and 
\[
l(g) = \sum_{\alpb \in S'} c'_\alpb l(\xb^\alpb) \pm \Theta_g l(\xb^\gammab).
\]
If $\gammab \in \cA\setminus (\even{n}\cup \{\betab\})$, by construction \eqref{eq:constrNormals34}, we have $l(\xb^\gammab)=0$, and therefore $l(g)>0$. 
If $\gammab \in \cA\cap \even{n}\cup \{\betab\}$, then applying Proposition~\ref{prop:AMGM} in the same way as we did to obtain \eqref{eq:9070}, we have 
\[
l(g) \geq \Theta_g\left(\prod_{\alpb \in S'} \left( l(\xb^\alpb)\right)^{\lambda_\alpb}- |l(\xb^\gammab)|\right).
\]
For every even $\alpb$ we have $l(\xb^\alpb)\geq 1$, and by our construction of the graded layers $L_0,L_1,\dots,L_K$, there is some $\bar \alpb \in S'$ such that $\betab \in L_i$ and $\bar \alpb \in L_j$ with $j>i$. By our choice of $l$ in~\eqref{eq:constrNormals34} we have 
\[
l(g) \geq \Theta_g\left( l(\xb^{\bar \alpb})^{\lambda_{\bar \alpb}}- l(\xb^\gammab)\right)\geq \Theta_g\left((\sigma^{-1}\delta)^{\Lambda^j\lambda_{\bar \alpb}} -(\sigma^{-1}\delta)^{\Lambda^i} \right)>0,
\]
since $\Lambda \lambda_\alpb>1$ for all $\alpb$, and hence $\Lambda^j\lambda_{\bar \alpb}>\Lambda^{j-1}\geq \Lambda^i$, and $\sigma^{-1}\delta>1$ by definition. 
\end{proof}

Putting the results of Sections~\ref{sec:unexposed} and~\ref{sec:exposed} together yields our main result. 
\begin{proof}[Proof of Theorem~\ref{thm:main}] Propositions~\ref{prop:unexposed}, \ref{prop:monomial}, and~\ref{prop:nonmonomial} cover all extreme rays of the SONC cone (as listed in Proposition~\ref{prop:extreme}). The only unexposed case comes from Proposition~\ref{prop:unexposed}, which corresponds to the case stated in the theorem.
\end{proof}

\section{Conclusion and open questions}\label{sec:open}

In this work, we have characterized the exposed rays of the SONC cone on an arbitrary ground set using an explicit construction based on a graded partition of the ground set. Note that this characterization is purely combinatorial and is based on the positions of the monomial powers with respect to each other, while our proof is constructive and allows us to obtain an explicit description of exposing normals for each exposed ray of the SONC cone. 

A natural question is whether this approach can be extended to describe the exposed faces of SONC cones (not only rays) and whether a similar combinatorial description can be achieved in that setting. In the case of the extreme rays considered in this work, it was possible to glean geometric intuition and to formulate the appropriate algebraic conjectures from studying four- and three-dimensional examples of SONC cones that can be easily visualized, however it is more difficult to use this approach for general faces, where the corresponding SONC cone may need to be higher dimensional to exhibit a sufficiently sophisticated geometric structure.

\section*{Acknowledgments}
The authors are grateful to the Australian Research Council for the continuing support. This work was partially supported by the ARC Discovery Project DP200100124.
M.D. is supported by the ARC Discovery Early Career Award DE240100674.

\bibliographystyle{alpha}
\bibliography{refs}

\end{document}